\newtheorem{theorem}{Theorem}[section]
\newtheorem{proposition}[theorem]{Proposition}
\newtheorem{lemma}[theorem]{Lemma}
\newtheorem{Example}[theorem]{Example}
\newtheorem{Remark}[theorem]{Remark}
\newtheorem{Remarks}[theorem]{Remarks}
\newtheorem{Question}[theorem]{Question}
\newcommand{\al}{\alpha}
\newcommand{\de}{\delta}
\newcommand{\eps}{\varepsilon}
\newcommand{\La}{\Lambda}
\newcommand{\si}{\sigma}
\let\cal=\mathcal
\let\Bbb=\mathbb
\begin{document}

\title{Ikehara-type theorem involving boundedness}

\subjclass[2000]{40E05}

\date{July 3, 2008}

\author{Jacob Korevaar}

\begin{abstract}
Let $\sum a_n/n^z$ be a Dirichlet series with nonnegative
coefficients that converges to a sum function
$f(z)=f(x+iy)$ for $x>1$. Setting
$s_N=\sum_{n\le N}\,a_n$, the paper gives a necessary and
sufficient condition for boundedness of $s_N/N$. As
$x\searrow 1$, the quotient $f(x+iy)/(x+iy)$ must converge
to a pseudomeasure $q(1+iy)$, the distributional Fourier
transform of a bounded function. The paper also gives an
optimal estimate for $s_N/N$ under the `real condition'
$f(x)=\cal{O}\{1/(1-x)\}$. 
\end{abstract}

\maketitle

\setcounter{equation}{0}    
\section{Introduction} \label{sec:1}
We recall the famous Tauberian theorem of Ikehara:
\begin{theorem} \label{the:1.1}
Suppose that the Dirichlet series
\begin{equation} \label{eq:1.1}
\sum_{n=1}^\infty\frac{a_n}{n^z}\quad\mbox{with}\;\;a_n\ge
0\;\;\mbox{and}\;\;z=x+iy  
\end{equation}
converges throughout the half-plane $\{x>1\}$, so that the sum
function $f(z)$ is analytic there.
Suppose furthermore that there is a constant $A$ such that the 
difference
\begin{equation} \label{eq:1.2}
g(z)=f(z)-\frac{A}{z-1}
\end{equation}
has an analytic or continuous extension to the closed half-plane
$\{x\ge 1\}$. Then the partial sums $s_N=\sum_{n\le N}a_n$
satisfy the limit relation
\begin{equation} \label{eq:1.3}
s_N/N\to A \quad\mbox{as}\;\;N\to\infty.
\end{equation}
\end{theorem}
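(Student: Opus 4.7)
The plan is to follow the Wiener--Ikehara route: represent $s_N/N-A$ as a function whose Laplace transform is controlled by the hypothesis on $g$, convolve with a Fej\'er-type kernel using the boundary regularity on $\{x=1\}$, and upgrade the smoothed statement to pointwise vanishing via the monotonicity of $s_N$. By Abel summation, for $x>1$,
\[
\frac{f(z)}{z}=\int_0^\infty s(u)e^{-zu}\,du,\qquad s(u):=\sum_{n\le e^u}a_n,
\]
and $A/(z-1)=\int_0^\infty Ae^u e^{-zu}\,du$. Subtracting,
\[
\frac{f(z)}{z}-\frac{A}{z-1}=\int_0^\infty h(u)\,e^{-(z-1)u}\,du,\qquad h(u):=s(u)e^{-u}-A.
\]
The hypothesis forces $f(z)/z-A/(z-1)=g(z)/z-A/z$ to extend continuously to $\{x\ge 1\}$; in particular
$G(t):=\lim_{\eps\searrow 0}\bigl[f(1+\eps+it)/(1+\eps+it)-A/(\eps+it)\bigr]$ is a well-defined continuous function on $\Bbb R$.

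For each $\la>0$, let $B_\la(t)=\max(1-|t|/(2\la),0)$ and let $K_\la\in L^1(\Bbb R)$ be its inverse Fourier transform, a Fej\'er kernel. Multiplying the Laplace representation at $z=1+\eps+it$ by $B_\la(t)e^{itu}$ and integrating $t$ over $[-2\la,2\la]$, Fourier inversion yields
\[
\int_0^\infty h(v)e^{-\eps v}K_\la(u-v)\,dv
=\frac{1}{2\pi}\int_{-2\la}^{2\la}B_\la(t)\Bigl[\tfrac{f(1+\eps+it)}{1+\eps+it}-\tfrac{A}{\eps+it}\Bigr]e^{itu}\,dt.
\]
As $\eps\searrow 0$, continuity of $G$ together with the compact support of $B_\la$ let the right-hand side converge to $(2\pi)^{-1}\int_{-2\la}^{2\la}B_\la(t)G(t)e^{itu}\,dt$, which in turn tends to $0$ as $u\to\infty$ by the Riemann--Lebesgue lemma. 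Combining with monotone convergence applied to $h+A\ge 0$ on the left, I conclude that $(h*K_\la)(u)\to 0$ as $u\to\infty$ for every fixed $\la>0$.

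Finally I upgrade to pointwise vanishing using monotonicity: since $s$ is non-decreasing, for $|v|\le\de$,
\[
(h(u-\de)+A)e^{-\de-v}-A\le h(u+v)\le(h(u+\de)+A)e^{\de-v}-A.
\]
Integrating against $K_\la$, choosing $\la$ so large that the mass of $K_\la$ is essentially concentrated on $(-\de,\de)$, and invoking the smoothed vanishing, one obtains $\limsup_{u\to\infty}h(u)\le C\de$ and $\liminf h(u)\ge -C\de$; letting $\de\to 0$ gives $h(u)\to 0$, equivalently $s_N/N\to A$. The principal obstacle is the passage $\eps\searrow 0$ on the left-hand side of the smoothed identity: $h$ is not known a~priori to be bounded (its boundedness is essentially what we are chasing), only to grow subexponentially, so dominated convergence is unavailable. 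The one-sided bound $h\ge -A$ coming from $a_n\ge 0$ is what saves the day, since it permits a monotone-convergence / Fatou argument once one knows the right-hand side has a finite limit as $\eps\searrow 0$; the continuity of $G$ on all of $\Bbb R$ given by the hypothesis on $g$ is precisely what delivers that limit.
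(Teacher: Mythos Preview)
The paper does not contain a proof of Theorem~\ref{the:1.1}. It is quoted in the introduction as the classical Wiener--Ikehara theorem, with references to \cite{Ik31}, \cite{Wi32}, \cite{Wi33}, \cite{Ko02}, \cite{Ko04a}; the paper's own work begins with Proposition~\ref{prop:1.2} and Theorem~\ref{the:1.3}. So there is no ``paper's own proof'' to compare your argument against.

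That said, what you have written is a sound outline of the standard Wiener--Ikehara argument. The Laplace representation of $h(u)=s(e^u)e^{-u}-A$, the identification $f(z)/z-A/(z-1)=g(z)/z-A/z$ (continuous on $\{x\ge 1\}$), the smoothing by the nonnegative Fej\'er kernel $K_\la$, the Riemann--Lebesgue step, and the de-smoothing via monotonicity of $s$ are all the right moves, and you correctly flag the only delicate point: the $\eps\searrow 0$ passage on the left, handled by the one-sided bound $h\ge -A$ together with $K_\la\ge 0$ and monotone convergence. If you want this to be a complete proof rather than a sketch, two places deserve a line more of care: (i) in the last step, make explicit how the tail mass $\int_{|v|>\de}K_\la(v)\,dv$ is controlled once you have shown, via the smoothed limit, that $h$ is bounded above (otherwise the contribution of $h(u+v)$ for $|v|>\de$ is not obviously negligible); and (ii) spell out that the interchange giving the convolution identity is justified for each fixed $\eps>0$ because $h(v)e^{-\eps v}$ is integrable against the bounded, compactly supported~(in $t$) integrand. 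These are routine, and your overall approach is the classical one.
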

The theorem is often called `Wiener--Ikehara theorem'
because Ikehara studied with Wiener, and applied the new
Tauberian method that Wiener was developing in the years
1926--1931; see \cite{Ik31}, \cite{Wi32}, \cite{Wi33} and cf.\
\cite{Ko02}, \cite{Ko04a}. Ikehara's theorem led to a greatly
simplified proof of the prime number theorem. 

In \cite{Ko04b} the author obtained a two-way form of the
theorem. Given $f(z)$ of the form (\ref{eq:1.1}), the following
condition is necessary and sufficient for (\ref{eq:1.3}): The
difference $g(z)=g(x+iy)$ must have a distributional limit
$g(1+iy)$ for $x\searrow 1$ which is locally equal to a
pseudofunction. That is, on every finite interval $(-B,B)$, the
distribution $g(1+iy)$ must be equal to a pseudofunction which
may depend on $B$. A pseudofunction is the distributional
Fourier transform of a bounded function that tends to $0$ at
infinity. It can also be characterized as a tempered
distribution which is locally given by Fourier series with
coefficients that tend to $0$. A pseudofunction may have
nonintegrable singularities, but not as strong as first-order
poles.
 
In connection with Ikehara's theorem one may ask (cf.\ Mhaskar
\cite{Mh08}) what condition on $f(z)$ would suffice for the
conclusion that
\begin{equation} \label{eq:1.4}
s_N/N=\cal{O}(1)\quad\mbox{as}\;\;N\to\infty.
\end{equation}
Unlike the situation in the case of power series $\sum a_nz^n$,
it is not enough when $f(\cdot)$ satisfies the `real condition'
\begin{equation} \label{eq:1.5}
f(x)=\cal{O}\{1/(x-1)\}\quad\mbox{as}\;\;x\searrow 1.
\end{equation}
\begin{proposition} \label{prop:1.2}
For Dirichlet series $(\ref{eq:1.1})$ with sum $f(z)$, condition
$(\ref{eq:1.5})$ implies the estimate
\begin{equation} \label{eq:1.6}
s_N/N=\cal{O}(\log N),
\end{equation}
and this order-estimate is best possible.
\end{proposition}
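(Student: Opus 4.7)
The plan for the upper bound $(\ref{eq:1.6})$ is a standard one-step optimisation exploiting $a_n \ge 0$. Truncating the Dirichlet series, for every $x > 1$ and every $N \ge 1$,
\[
f(x) \;\ge\; \sum_{n \le N}\frac{a_n}{n^x} \;\ge\; N^{-x}s_N,
\]
so the hypothesis $(\ref{eq:1.5})$ gives $s_N \le CN^{x}/(x-1)$. I then set $x = x_N := 1 + 1/\log N$, so that $N^{x_N} = eN$ and $1/(x_N-1) = \log N$, which yields $s_N \le eCN\log N$. This should take only a few lines.

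For the sharpness I will construct an explicit lacunary example with $a_n \ge 0$ satisfying $(\ref{eq:1.5})$ yet $s_N/(N\log N) \not\to 0$. Put $b_k := 2^k$, $N_k := \lfloor e^{b_k}\rfloor$, and define $a_{N_k} := b_k N_k$, $a_n := 0$ otherwise. Because the nonzero coefficients grow super-geometrically, $s_{N_k} = \sum_{j \le k}b_j N_j \sim b_k N_k$, and since $\log N_k \sim b_k$ this gives $s_{N_k}/(N_k\log N_k) \to 1$, which precludes replacing $\log N$ in $(\ref{eq:1.6})$ by any $o(\log N)$. To check $(\ref{eq:1.5})$, with $\eps := x-1$ the series becomes
\[
f(1+\eps) \;=\; \sum_{k=0}^\infty b_k N_k^{-\eps} \;\sim\; \sum_{k=0}^\infty 2^k e^{-\eps\,2^k},
\]
and I split the sum at $k_0 := \lfloor \log_2(1/\eps)\rfloor$: the head $\sum_{k \le k_0}2^k$ is a geometric sum of size $\cal{O}(2^{k_0}) = \cal{O}(1/\eps)$, while for the tail the substitution $j := k - k_0$ turns $\sum_{k > k_0}2^k e^{-\eps 2^k}$ into $2^{k_0}\sum_{j \ge 1}2^j e^{-\alpha\,2^j}$ with $\alpha := \eps\,2^{k_0} \in [1/2,1]$; the inner series is uniformly bounded, so the tail is again $\cal{O}(1/\eps)$.

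The only place I expect any subtlety is this split-and-bound estimate for the sparse Dirichlet series; everything else — the truncation giving the upper bound and the super-geometric growth giving $s_{N_k}\sim N_k\log N_k$ — is automatic. Once both parts are in hand, the proposition follows immediately.
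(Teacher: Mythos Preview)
Your argument is correct and essentially matches the paper's: the same choice $x_N=1+1/\log N$ for the upper bound (the paper detours through $\si_N=\sum_{n\le N}a_n/n\le ef(x_N)$ and then uses $s_N\le N\si_N$, but this is cosmetic), and the same doubly-exponential lacunary construction for sharpness, the paper taking $N_k=2^{2^k}$ and $a_{N_k}=2^{2^k+k}=N_k\log N_k/\log 2$. The only technical variation is that the paper bounds $f(1+\de)=\sum_k 2^k/2^{2^k\de}$ by comparing with the integral (plus the maximum) of the unimodal function $h(t)=2^t/2^{2^t\de}$ rather than your head/tail split at $k_0\approx\log_2(1/\de)$.
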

See Section \ref{sec:2}. In Section \ref{sec:3} we
will prove
\begin{theorem} \label{the:1.3} 
Let the series $\sum a_n/n^z$ with coefficients
$a_n\ge 0$ converge to $f(z)=f(x+iy)$ for $x>1$. Setting
$s_N=\sum_{n\le N}\,a_n$ as before, the sequence $\{s_N/N\}$
will remain bounded if and only if the quotient 
\begin{equation} \label{eq:1.7}
q(x+iy)=\frac{f(x+iy)}{x+iy}\qquad(x>1)
\end{equation}
converges in the sense of tempered distributions to a
pseudomeasure $q(1+iy)$ as $x\searrow 1$. 
\end{theorem}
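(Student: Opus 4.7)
The plan is to realize $q(x+iy)$ as the Fourier transform of a simple function built from the partial sum function $S(u)=s_{\lfloor u\rfloor}$, thereby reducing the problem to an $L^\infty$ question. I would start from Abel summation, which for $x>1$ gives $f(z)/z = \int_1^\infty S(u)\,u^{-z-1}\,du$. The substitution $u=e^t$, together with $V(t):=S(e^t)/e^t$, turns this into
\[
q(x+iy) \;=\; \int_0^\infty V(t)\, e^{-(x-1)t}\, e^{-iyt}\, dt,
\]
so $q(x+iy)$ is the Fourier transform (in $y$) of $V_x(t) := V(t)\,e^{-(x-1)t}\,\mathbf{1}_{[0,\infty)}(t)$. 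Since $V(\log N)=s_N/N$ and $V$ decays exponentially between its upward jumps at the points $\log n$, the hypothesis that $\{s_N/N\}$ is bounded is equivalent to $V\in L^\infty[0,\infty)$.

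For the `only if' direction, if $V\le M$ then $V\mathbf{1}_{[0,\infty)}\in L^\infty$, and its Fourier transform is by definition a pseudomeasure. Dominated convergence (with dominant $M|\check\varphi|$) applied to $\langle V_x,\check\varphi\rangle$ for $\varphi\in\mathcal{S}$ gives $V_x\to V\mathbf{1}_{[0,\infty)}$ in $\mathcal{S}'$, hence $q(x+iy)\to\widehat{V\mathbf{1}_{[0,\infty)}}$ in $\mathcal{S}'$ as $x\searrow 1$.

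For the converse, I would assume $q(x+iy)\to\hat\phi$ in $\mathcal{S}'$ for some $\phi\in L^\infty$. Continuity of the Fourier transform on $\mathcal{S}'$ yields $V_x\to\phi$ in $\mathcal{S}'$. Although $V$ is not a priori tempered, it \emph{is} locally bounded on $(0,\infty)$ (on $[a,b]$ one has $V(t)\le S(e^b)/e^a$), so $V\in L^1_{\mathrm{loc}}(0,\infty)$. For any $\psi\in C_c^\infty(0,\infty)$, dominated convergence gives $\int V_x\psi\to\int V\psi$, while distributional convergence gives $\int V_x\psi\to\int\phi\psi$. Hence $V=\phi$ a.e.\ on $(0,\infty)$, so $V$ is essentially bounded; right-continuity of $V$ then upgrades this to a genuine pointwise bound, yielding $s_N/N=\mathcal{O}(1)$.

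The main obstacle lies in the converse: converting weak/distributional information about $q(x+iy)$ into pointwise control of $V$. The hypothesis $a_n\ge 0$ is essential on two counts: it guarantees local boundedness of $V$ (so that dominated convergence applies on compact subsets of $(0,\infty)$), and it forces $V$ to be right-continuous with only isolated upward jumps, so that an almost-everywhere bound automatically becomes a pointwise bound.
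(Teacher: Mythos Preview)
Your proof is correct and follows essentially the same route as the paper: write $q(x+iy)$ via Abel summation and the change of variable $v=e^{t}$ as the Fourier transform of $S_x(t)=e^{-(x-1)t}\,e^{-t}s(e^{t})\,\mathbf{1}_{[0,\infty)}(t)$, then use continuity of the Fourier transform on $\mathcal{S}'$ in both directions. You are somewhat more explicit than the paper about the converse step (local boundedness of $V$ to justify dominated convergence against compactly supported test functions, and right-continuity to pass from an a.e.\ bound to a pointwise one), but the underlying argument is identical.
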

A pseudomeasure is the distributional Fourier transform of a
bounded measurable function. It has local representations by
Fourier series with uniformly bounded coefficients. A simple
example is given by the delta distribution or Dirac measure. The
following pseudomeasure is the boundary distribution of an
analytic function: 
$$\frac{1}{+0+iy}\stackrel{\mathrm{def}}{=}\lim_{x\searrow
0}\,\frac{1}{x+iy}=
\lim_{x\searrow 0}\,\int_0^\infty e^{-xt}e^{-iyt}dt.$$ 
It is the Fourier transform of the Heaviside function $1_+(t)$,
which equals $1$ for $t\ge 0$ and $0$ for $t<0$. Pseudomeasures
can have no singularities worse than first-order poles; cf.\ 
(\ref{eq:3.2}) below.

\setcounter{equation}{0} 
\section{Proof of Proposition \ref{prop:1.2}} \label{sec:2}
The proof consists of two parts.

(i) Let $f(z)=\sum a_n/n^z$ with $a_n\ge 0$ as in (\ref{eq:1.1})
satisfy the real condition (\ref{eq:1.5}). Setting
$x=x_N=1+1/\log N$, one finds that 
$$\si_N \stackrel{\mathrm{def}}{=}\sum_{n\le
N}\,\frac{a_n}{n}\le e\sum_{n\le
N}\,\frac{a_n}{n}e^{-(\log n)/\log N}
\le ef(x_N)=\cal{O}(\log N).$$  
A crude estimate now gives the result of (\ref{eq:1.6}):
$$s_N = \sum_1^N\,n\,\frac{a_n}{n}\le N\si_N=\cal{O}(N\log
N).$$

(ii) For the second part we use an example. 
\begin{lemma} \label{lem:2.1} Let 
\begin{equation} \label{eq:2.1}
a_n =\left\{\begin{array}{ll}
2^{2^k+k} & \mbox{for
$n=2^{2^k},\;k=1,\,2,\,\cdots$},\\ 0 & \mbox{for
all other $n$.}
\end{array}\right.
\end{equation}
Then 
\begin{equation} \label{eq:2.2}
f(x)=\sum\,\frac{a_n}{n^x}
=\cal{O}\{1/(x-1)\}\quad\mbox{as}\;\;x\searrow 1,
\end{equation}
but  
\begin{equation} \label{eq:2.3}
\mbox{for}\;\;N=2^{2^k},\;\;\mbox{one
has}\;\;s_N\ge a_N=(1/\log 2)N\log N.
\end{equation}
\end{lemma}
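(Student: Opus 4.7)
The plan is to verify the identity in \eqref{eq:2.3} by direct computation and then establish the bound \eqref{eq:2.2} by a careful splitting of the series according to the size of $2^k(x-1)$.

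For the lower bound, I would observe that if $N=2^{2^k}$, then $\log N = 2^k\log 2$, so
$$a_N = 2^{2^k+k}=2^{2^k}\cdot 2^k = N\cdot \frac{\log N}{\log 2},$$
and since every $a_n\ge 0$ the partial sum $s_N$ dominates the single term $a_N$. This gives \eqref{eq:2.3} immediately.

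The substantive step is the upper bound \eqref{eq:2.2}. Set $t=x-1>0$. Using the explicit form of the sequence,
$$f(x)=\sum_{k\ge 1} 2^{2^k+k}\cdot 2^{-2^k x}=\sum_{k\ge 1} 2^k\, 2^{-2^k t}.$$
The terms $u_k=2^k 2^{-2^k t}$ peak roughly where $2^k t\asymp 1$, so I would choose $K=K(t)$ with $2^K t\le 1<2^{K+1}t$ and split the sum at $k=K$. For $k\le K$ one has $2^{-2^k t}\le 1$, so $\sum_{k\le K}u_k\le 2^{K+1}\le 2/t$. For $k=K+1+m$ with $m\ge 0$, the inequality $2^k t\ge 2^m$ gives $u_k\le 2^{K+1+m}\cdot 2^{-2^m}$, and the convergent tail $\sum_{m\ge 0}2^m 2^{-2^m}$ yields $\sum_{k>K}u_k = \mathcal{O}(2^K)=\mathcal{O}(1/t)$. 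Summing the two estimates produces $f(x)=\mathcal{O}(1/t)=\mathcal{O}\{1/(x-1)\}$, as desired.

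The only delicate point is handling the contribution of $k$ just past the peak, where both factors $2^k$ and $2^{-2^k t}$ are of comparable orders; the double-exponential decay against single-exponential growth is what makes the tail geometric in $m$ and keeps the bound of the correct order $1/t$. Everything else is bookkeeping.
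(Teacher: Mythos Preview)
Your proof is correct. The verification of \eqref{eq:2.3} is identical to the paper's. For \eqref{eq:2.2} you take a genuinely different route: the paper observes that $h(t)=2^t/2^{2^t\delta}$ is unimodal on $(0,\infty)$ and therefore bounds the sum $\sum_k h(k)$ by $\int_0^\infty h(t)\,dt + h(t_0)$, each of which is $\cal{O}(1/\delta)$ by a substitution and a direct computation of the maximum. You instead locate the peak index $K$ with $2^K t\asymp 1$ and split the sum explicitly, controlling the head by the trivial bound $2^{-2^k t}\le 1$ and the tail by the doubly-exponential decay $2^{-2^k t}\le 2^{-2^m}$. Your argument is slightly more hands-on but avoids any calculus; the paper's integral comparison is a bit slicker once one notices unimodality. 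Both rest on the same heuristic that the dominant contribution comes from $2^k t\approx 1$, and both give the same $\cal{O}(1/(x-1))$ with comparable effort.
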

\begin{proof}
Take $x=1+\de$ with $0<\de<1$. Then
$$f(x)=\sum\frac{2^{2^k+k}}{2^{2^kx}}=\sum\frac{2^k}
{2^{2^k\de}}.$$
Observe that the graph of
$$h(t)=\frac{2^t}{2^{2^t\de}}\qquad(0<t<\infty)$$
is rising to a maximum at some point $t=t_0(\de)$ and
then falling. Thus the sum for $f(x)$ is majorized by the
integral of $h(t)$ over $(0,\infty)$ plus the value $h(t_0)$.
Both have the form $const/\de$, hence (\ref{eq:2.2}).

Now take $N$ of the form $2^{2^k}$, so that $\log N=2^k\log 2$.
Then
$$a_N=2^{2^k+k}=N2^k = N(\log N)/\log 2.$$
\end{proof}

\setcounter{equation}{0} 
\section{Proof of Theorem \ref{the:1.3}} \label{sec:3}
Note that the distributional convergence in the theorem is
convergence in the Schwartz space $\cal{S}'$. In other words,  
\begin{equation} \label{eq:3.1}
<q(x+iy),\phi(y)>\;\to\;
<q(1+iy),\phi(y)>\quad\mbox{as}\;\;x\searrow 1
\end{equation}
for all testing functions $\phi(y)\in\cal{S}$, that is, all
rapidly decreasing $C^\infty$ functions; see Schwartz
\cite{Sch66} or H\"{o}rmander \cite{Ho83}.  

\begin{proof}[Proof of Theorem \ref{the:1.3}] Let $f(z)$
be the sum of the Dirichlet series in the theorem. Now define
$s(v)=\sum_{n\le v}\,a_n$, so that $s(v)=0$ for $v<0$ and
$s_N=s(N)=\cal{O}(N^{1+\eps})$ for every $\eps>0$. Integrating
by parts, one obtains a representation for
$q(z)=f(z)/z$ as a Mellin transform:
$$q(z)=(1/z)\int_{1-}^\infty v^{-z}ds(v) =
\int_1^\infty s(v)v^{-z-1}dv\qquad(x>1).$$
The substitution $v=e^t$ gives $q(z)$ as a shifted
Laplace transform of $S(t)=e^{-t}s(e^t)$:
$$q(z) = \int_0^\infty s(e^t)e^{-zt}dt=\int_0^\infty
S(t)e^{-(z-1)t}dt\qquad(x>1).$$

\smallskip
(i) Suppose that the sequence $\{s_N/N\}$ is bounded. Then
$S(t)$ is bounded, $|S(t)|\le M$, say. Hence
\begin{equation} \label{eq:3.2}
|q(z)|\le\frac{M}{x-1}\quad\mbox{for}\;\;x>1.
\end{equation}
Thus the boundary singularities of $q(z)$ on the line $\{x=1\}$
can be no worse than first-order poles. We will verify that in
the sense of distributions,
$$q(x+iy)\to q(1+iy)\stackrel{\mathrm{def}}{=}\hat S(y),$$
where $\hat S(y)$ denotes the distributional Fourier transform
of $S(t)$. Indeed, for fixed $x>1$, the function $q(x+iy)$ is
the Fourier transform of
$S_x(t)=S(t)e^{-(x-1)t}$. Since $|S(t)|\le M$ and
$S(t)=0$ for $t<0$, the functions $S_x(t)$ converge to
$S(t)$ boundedly as $x\searrow 1$, hence in the sense of
tempered distributions. Since distributional Fourier
transformation is continuous on the Schwartz space $\cal{S}'$,
it follows that $q(x+iy)$ converges to the distributional
Fourier transform of $S(t)$ -- in this case a pseudomeasure. 

\smallskip
(ii) Conversely, suppose that $q(x+iy)=\hat S_x(y)$ converges to
a pseudomeasure as $x\searrow 1$, symbolically written as
$q(1+iy)$. Then $q(1+iy)$ is the Fourier transform $\hat H(y)$
of a bounded function $H(t)$. By the continuity of inverse
Fourier transformation, this implies that $H(t)$ is the
distributional limit of $S_x(t)=S(t)e^{-(x-1)t}$ as $x\searrow
1$. But the latter limit is equal to $S(t)$: 
$$<S_x(t),\phi_0(t)>\,=\int_0^\infty
S_x(t)\phi_0(t)dt\;\to\;<S(t),\phi_0(t)>$$
for all $C^\infty$ functions $\phi_0(t)$ of compact support. It
follows that $S(t)=H(t)$ on ${\Bbb R}$, hence bounded.
\end{proof}

\setcounter{equation}{0} 
\section{Final remarks} \label{sec:4}
Let $\pi_2(N)$ denote the number of prime twins $(p,\,p+2)$
with $p\le N$. The famous twin-prime conjecture (TPC) of Hardy
and Littlewood \cite{HL23} asserts that for $N\to\infty$,
\begin{equation} \label{eq:4.1}
\pi_2(N)\sim 2C_2{\rm li}_2(N)=
2C_2\int_2^N\frac{dt}{\log^2 t}\sim
2C_2\frac{N}{\log^2 N}.
\end{equation}
Here $C_2$ is the `twin-prime constant',
$$C_2 = \prod_{p\,{\rm
prime},\,p>2}\,\left\{1-\frac{1}{(p-1)^2}\right\}
\approx 0.6601618.$$
For the discussion of the TPC it is convenient to introduce the
modified counting function
\begin{equation} \label{eq:4.2}
\psi_2(N)\stackrel{\mathrm{def}}{=}\sum_{n\le
N}\,\La(n)\La(n+2),
\end{equation}
where $\La(k)$ denotes von Mangoldt's function. Since
$\La(k)=\log p$ if $k=p^\al$ for some prime number $p$ and
$\La(k)=0$ otherwise, the TPC turns out to be
equivalent to the asymptotic relation
\begin{equation} \label{eq:4.3}
\psi_2(N)\sim 2C_2N\quad\mbox{as}\;\;N\to\infty.
\end{equation}
It is natural then to introduce the Dirichlet series
\begin{equation} \label{eq:4.4}
D_2(z)\stackrel{\mathrm{def}}{=}\sum_{n=1}^\infty
\,\frac{\La(n)\La(n+2)}{n^z}\qquad(z=x+iy,\;x>1).
\end{equation}
By a sieving argument, cf.\ Halberstam and Richert \cite{HR74},
one has $\pi_2(N)=\cal{O}(N/\log^2N)$, or equivalently,
$\psi_2(N)=\cal{O}(N)$. By Theorem \ref{the:1.3} another
equivalent statement is that the quotient $D_2(x+iy)/(x+iy)$
converges distributionally to a pseudomeasure as $x\searrow 1$.
And finally, by the two-way Ikehara--Wiener theorem referred to
in Section \ref{sec:1}, the TPC is equivalent to the
conjecture that the difference $D_2(z)-2C_2/(z-1)$ has local
pseudofunction boundary behavior as $x\searrow 1$.

\bigskip

\noindent{\scshape KdV Institute of Mathematics, University of 
Amsterdam, \\
Plantage Muidergracht 24, 1018 TV Amsterdam, Netherlands}

\noindent{\it E-mail address}: {\tt J.Korevaar@uva.nl}

\enddocument
\begin{thebibliography}{[11]} 
\bibitem{HR74} H.\ Halberstam, and H.-E.\ Richert,
{\it Sieve methods}. Academic Press, London, 1974. 

\bibitem{HL23} G.\ H.\ Hardy and J.\ E.\ Littlewood,
{\it Some problems of `partitio numerorum'. III: On the
expression  of a number as a sum of primes}. Acta Math.\
{\bf 44} (1923), 1--70. 

\bibitem{Ho83} H\"{o}rmander, L., {\it The Analysis of Linear
Partial Differential Operators}, vol.\ 1. Grundl.\ math.\
Wiss.\ vol.\ 256, Springer, Berlin, 1983.  

\bibitem{Ik31} S.\ Ikehara, {\it An extension of
Landau's theorem in the analytic theory of numbers}, J.\ Math.\
and Phys.\ {\bf 10} (1931), 1--12.

\bibitem{Ko02} J.\ Korevaar, {\it A century of complex Tauberian
theory}, Bull.\ Amer.\ Math.\ Soc.\ (N.S.) {\bf 39} (2002),
475--531.

\bibitem{Ko04a} J.\ Korevaar, {\it Tauberian Theory, a Century 
of Developments}, Grundl.\ math.\ Wiss.\ vol.\ 329, Springer,
Berlin, 2004. 

\bibitem{Ko04b} J.\ Korevaar, {\it Distributional 
Wiener--Ikehara theorem and twin primes}, Indag.\ Math.\ (N.S.)
{\bf 16} (2005), 37--49. 

\bibitem{Mh08} H.\ Mhaskar, {\it A question about the
Wiener--Ikehara theorem}. In e-mail of April 14, 2008.

\bibitem{Sch66} Schwartz, L., {\it Th\'{e}orie des
Distributions} I, II. Hermann, Paris, 1966. (First edition
1950/51.)   

\bibitem{Wi32} N.\ Wiener, {\it Tauberian theorems}, Ann.\ of
Math.\ {\bf 33} (1932), 1--100.

\bibitem{Wi33} N.\ Wiener, N. {\it The Fourier Integral and
Certain of its  Applications}, Cambridge Univ.\ Press, Cambridge,
1933.


\end{thebibliography}
